\documentclass{article}
\usepackage{epsf}
\usepackage{latexsym}
\usepackage{amsfonts}
\usepackage{amsmath}

\newtheorem{theorem}{Theorem}[section]

\newenvironment{proof}{{\sc Proof:}}{~\hfill $\quad\Box$}

\begin{document}

\title{The Nature of Length, Area, and Volume in Taxicab Geometry}

\author{Kevin P. Thompson\thanks{North Arkansas College}}

\date{}

\thispagestyle{empty}
\renewcommand\thispagestyle[1]{} % Prevents \maketitle from inserting page no.

\maketitle
\begin{abstract}
While the concept of straight-line length is well understood in taxicab geometry, little research has been done into the length of curves or the nature of area and volume in this geometry. This paper sets forth a comprehensive view of the basic dimensional measures in taxicab geometry.
\end{abstract}

\section{Introduction}

When taxicab geometry is mentioned, the context is often a contrast with two-dimensional Euclidean geometry. Whatever the emphasis, the depth of the discussion frequently does not venture far beyond how the measurement of length is affected by the usual Euclidean metric being replaced with the taxicab metric where the distance between two points $(x_1,y_1)$ and $(x_2,y_2)$ is given by
\[
d_t=|x_2-x_1|+|y_2-y_1|
\]

Papers have appeared that explore length in taxicab geometry in greater detail \cite{Ozcan,OzcanKaya}, and a few papers have derived area and volume formulae for certain figures and solids using exclusively taxicab measurements \cite{Colakoglu,Kaya,OzcanKayaArea}. However, this well-built foundation has left a number of open questions. What is the taxicab length of a curve in two dimensions? In three dimensions? What does area actually mean in two-dimensional taxicab geometry? Is the taxicab area of a ``flat'' surface in three dimensions comparable to the taxicab area of the ``same'' surface (in a Euclidean sense) in two dimensions? These are all fundamental questions that do not appear to have been answered by the current body of research in taxicab geometry. In this paper we wish to provide a comprehensive, unified view of length, area, and volume in taxicab geometry.

Where suitable and enlightening, we will use the value $\pi_t=4$ as the value for $\pi$ in taxicab geometry \cite{Euler}.

\section{The Nature of Length}

The first dimensional measure we will examine is the simplest of the measures: the measurement of length. This is also, for line segments at least, the most well understood measure in taxicab geometry.

\subsection{One-dimensional Length}

The simplest measurement of length is the length of a line segment in one dimension. On this point, Euclidean and taxicab geometry are in complete agreement. The length of a line segment from a point $A=a$ to another point $B=b$ is simply the number of unit lengths covered by the line segment,
\[
d_e(A,B)=d_t(A,B)=|b-a|
\]

\subsection{Two-dimensional Linear Length}

In two dimensions, however, the Euclidean and taxicab metrics are not always in agreement on the length of a line segment. For line segments parallel to a coordinate axis, such as the line segment with endpoints $A=(x_1,y)$ and $B=(x_2,y)$, there is agreement since both metrics reduce to one-dimensional measurement: $d_e(A,B)=d_t(A,B)=|x_2-x_1|$. Only when the line segment is not parallel to one of the coordinate axes do we finally see disagreement between the Euclidean and taxicab metrics. The taxicab length of such a line segment can be viewed as the sum of the Euclidean lengths of the projections of the line segment onto the coordinate axes (Figure \ref{taxicab_length}),
\begin{equation}
\label{eq_taxicab_length}
d_t=d_e|\cos\theta|+d_e|\sin\theta|
\end{equation}

\begin{figure}[b]
\centerline{\epsffile{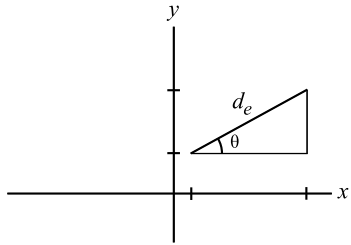}}
\caption{Taxicab length as a sum of Euclidean projections onto the coordinate axes.}
\label{taxicab_length}
\end{figure}

\noindent The Pythagorean Theorem tells us the Euclidean and taxicab lengths will generally not agree for line segments that are not parallel to one of the coordinate axes. Line segments of the same Euclidean length will have various taxicab lengths as their position relative to the coordinate axes changes. If one were to place a scale on a diagonal line, the Euclidean and taxicab markings would differ with the largest discrepancy being at a $45^{\circ}$ angle to the coordinate axes (Figure \ref{scale_diff}).

\begin{figure}[t]
\centerline{\epsffile{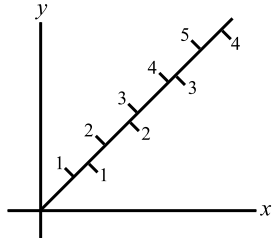}}
\caption{The scale of measurement differs along a diagonal line for Euclidean geometry (below the line) and taxicab geometry (above the line).}
\label{scale_diff}
\end{figure}

These cases and types of length measurement are well known and are well understood to those familiar with taxicab geometry. But, even in two dimensions, there is at least one other type of length measurement in Euclidean geometry: the length of a curve. How is the length of a (functional) curve in two dimensions measured in taxicab geometry?

\subsection{Arc Length in Two Dimensions}

In Euclidean geometry, the arc length of a curve described by a function $f$ with a continuous derivative over an interval $[a,b]$ is given by
\[
L=\int_a^b \! \sqrt{1+[f'(x)]^2} \, dx
\]

\begin{figure}[b]
\centerline{\epsffile{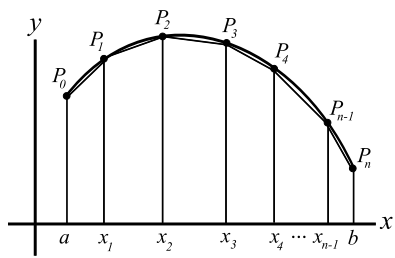}}
\caption{Approximating arc length in taxicab geometry.}
\label{arc_length_2d}
\end{figure}

\noindent To find a corresponding formula in taxicab geometry, we may follow a traditional method for deriving arc length. First, split the interval $[a,b]$ into $n$ subintervals with widths $\triangle x_1,\triangle x_2,\ldots,\triangle x_n$ by defining points $x_1,x_2,...,x_{n-1}$ between $a$ and $b$ (Figure \ref{arc_length_2d}). This allows for definition of points $P_0,P_1,P_2,...,P_n$ on the curve whose $x$-coordinates are $a,x_1,x_2,...,x_{n-1},b$. By connecting these points, we obtain a polygonal path approximating the curve. At this crucial point of the derivation we apply the taxicab metric instead of the Euclidean metric to obtain the taxicab length $L_k$ of the $k$th line segment,
\[
L_k=\triangle x_k+|f(x_k)-f(x_{k-1})|
\]
By the Mean Value Theorem, there is a point $x_k^*$ between $x_{k-1}$ and $x_k$ such that $f(x_k)-f(x_{k-1})=f'(x_k^*)\triangle x_k$. Thus, the taxicab length of the $k$th segment becomes
\[
L_k=\triangle x_k+|f'(x_k^*)\triangle x_k|=(1+|f'(x_k^*)|)\, \triangle x_k
\]
and the taxicab length of the entire polygonal path is
\[
\sum_{k=1}^n L_k=\sum_{k=1}^n (1+|f'(x_k^*)|)\, \triangle x_k
\]
By increasing the number of subintervals while forcing $\max \triangle x_k \to 0$, the length of the polygonal path will approach the arc length of the curve. So,
\[
L=\lim_{\max \triangle x \to 0} \sum_{k=1}^n (1+|f'(x_k^*)|)\, \triangle x_k
\]
The right side of this equation is a definite integral, so the taxicab arc length of the curve described by the function $f$ over the interval $[a,b]$ is given by 
\begin{equation}
\label{eq_arc_length_2d}
L=\int_a^b \! \left(1+|f'(x)|\right) \, dx
\end{equation}

As an example, the northeast quadrant of a taxicab circle of radius $r$ centered at the origin is described by $f\left(x\right)=-x+r$. The arc length of this curve over the interval $[0,r]$ is given by
\[
L=\int_{0}^{r} \! \left(1+\left|-1\right|\right) \, dx=2r
\]
which is precisely one-fourth of the circumference of a taxicab circle of radius $r$. A more interesting application involves the northeast quadrant of the Euclidean circle of radius $r$ at the origin described by $f\left(x\right)=\sqrt{r^{2}-x^{2}}$.
\begin{eqnarray*}
L & = & \int_{0}^{r} \! \left(1+\left|-x\left(r^{2}-x^{2}\right)^{-\frac{1}{2}}\right|\right) \, dx\\
 & = & \left.\left(x-\sqrt{r^{2}-x^{2}}\right) \, \right|_{0}^{r}\\
 & = & r-0-0+r\\
 & = & 2r
\end{eqnarray*}

\begin{figure}[t]
\centerline{\epsffile{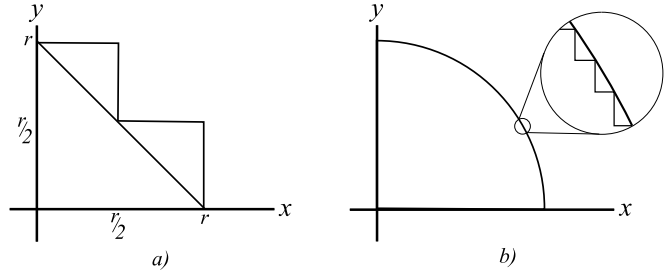}}
\caption{a) Multiple straight-line paths between two points can have the same length in taxicab geometry. b) A curve can be approximated by small horizontal and vertical paths.}
\label{different_paths}
\end{figure}

The distance is the same as if we had traveled along the taxicab circle between the two endpoints! While this is a shocking result to the Euclidean observer who is accustomed to distinct paths between two points generally having different lengths, the taxicab observer merely shrugs his shoulders. A curve such as a Euclidean circle can be approximated with increasingly small horizontal and vertical steps near its path (Figure \ref{different_paths}b). As any good introduction to taxicab geometry teaches us, such as Krause \cite{Krause}, multiple straight-line paths between two points have the same length in taxicab geometry (see Figure \ref{different_paths}a). So, each of these approximations of the Euclidean circle will have the same taxicab length. Therefore, we should expect the limiting case to also have the same length. To further make the point, we can follow the Euclidean parabola $f(x)=-\frac{1}{r}x^{2}+r$ in the first quadrant and arrive at the same result (Figure \ref{multiple_curves}).
\[
L=\int_{0}^{r} \! \left(1+\left|\frac{-2x}{r}\right|\right) \, dx=r+r=2r
\]

\begin{figure}[h]
\centerline{\epsffile{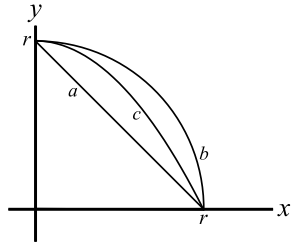}}
\caption{Multiple curves between two points can have the same length in taxicab geometry. Shown are a) part of the taxicab circle of radius $r$, b) part of the Euclidean circle of radius $r$, and c) part of the curve $-\frac{1}{r}x^{2}+r$.}
\label{multiple_curves}
\end{figure}

To formalize these observations, we have the following theorem.

\begin{theorem}
\label{thm_multiple_paths}
If a function $f$ is monotone increasing or decreasing and differentiable with a continuous derivative over an interval $[a,b]$, then the arc length of $f$ over $[a,b]$ is
\[
L=\left(b-a\right)+\left|f\left(b\right)-f\left(a\right)\right|
\]
(i.e. the path from $(a,v=f(a))$ to $(b,w=f(b))$ is independent of the function $f$ under the stated conditions).
\end{theorem}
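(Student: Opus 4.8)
The plan is to reduce everything to the arc length formula~(\ref{eq_arc_length_2d}) already established, and then to use monotonicity to evaluate the resulting integral in closed form. Starting from
\[
L = \int_a^b \left(1 + |f'(x)|\right)\, dx = (b-a) + \int_a^b |f'(x)|\, dx,
\]
the entire theorem comes down to showing that $\int_a^b |f'(x)|\, dx = |f(b) - f(a)|$ whenever $f$ is monotone on $[a,b]$.

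First I would observe that a differentiable monotone increasing function has $f'(x) \geq 0$ throughout $[a,b]$, while a monotone decreasing one has $f'(x) \leq 0$; this is where the monotonicity hypothesis does all its work, since it is exactly what permits the absolute value on $f'$ to be removed with a single global choice of sign. In the increasing case $|f'(x)| = f'(x)$, and since $f'$ is continuous the Fundamental Theorem of Calculus gives $\int_a^b |f'(x)|\, dx = \int_a^b f'(x)\, dx = f(b) - f(a)$; moreover $f(b) \geq f(a)$ here, so this equals $|f(b) - f(a)|$. In the decreasing case $|f'(x)| = -f'(x)$, so the integral equals $-(f(b) - f(a)) = f(a) - f(b)$, which again equals $|f(b) - f(a)|$ because now $f(a) \geq f(b)$. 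Substituting back into the displayed expression for $L$ yields $L = (b-a) + |f(b) - f(a)|$ in both cases, completing the argument.

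There is essentially no serious obstacle here: once the sign of $f'$ is pinned down, the claim is a one-line application of the Fundamental Theorem of Calculus. The only point that deserves a word of care is the justification that monotone-plus-differentiable forces $f'$ to have constant (weak) sign, which follows from the limit definition of the derivative applied to difference quotients that are all nonnegative (respectively nonpositive). It is also worth remarking to the reader why the result is natural: the quantity $(b-a) + |f(b)-f(a)|$ is precisely the taxicab distance between the endpoints $(a, f(a))$ and $(b, f(b))$, so under monotonicity the curve has the same taxicab length as the straight taxicab path joining its endpoints — the continuous analogue of the staircase observation illustrated in Figure~\ref{different_paths}.
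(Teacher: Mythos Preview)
Your argument is correct and follows essentially the same route as the paper: split the arc-length integral as $(b-a)+\int_a^b|f'(x)|\,dx$, use monotonicity to fix the sign of $f'$, and apply the Fundamental Theorem of Calculus to each case. The only additions are your explicit justification that monotone differentiable functions have a derivative of constant weak sign and your closing remark interpreting the result as the taxicab distance between the endpoints, both of which are welcome but not substantively different from the paper's proof.
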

\begin{proof}
The taxicab arc length of $f$ over $[a,b]$ is
\[
L=\int_a^b \! \left(1+\left|f'\left(x\right)\right|\right) \, dx=\left(b-a\right)+\int_a^b \! \left(\left|f'\left(x\right)\right|\right) \, dx
\]
If f is monotone increasing, $f'\left(x\right)\geq0$. And, since f has a continuous derivative, we may apply the First Fundamental Theorem of Calculus to get
\[
L=\left(b-a\right)+f\left(b\right)-f\left(a\right)
\]
If f is monotone decreasing, we get the similar result
\[
L=\left(b-a\right)+f\left(a\right)-f\left(b\right)
\]
In either case, the latter difference must be positive so we have 
\[
L=\left(b-a\right)+\left|f\left(b\right)-f\left(a\right)\right|
\]
\end{proof}

For functions that are not monotone increasing or decreasing, the arc length can be found using this theorem by taking the sum of the arc lengths over the subintervals where the function is monotone increasing or decreasing.

\subsection{Three-Dimensional Linear Length}

The taxicab length of a line segment in three dimensions is a natural extension of the formula in two dimensions. For a line segment with endpoints $(x_1,y_1,z_1)$ and $(x_2,y_2,z_2)$,
\[
d_t=|x_2-x_1|+|y_2-y_1|+|z_2-z_1|
\]
A nice discussion of the three-dimensional metric and other properties is given in \cite{Akca}. There are no surprises here, and the taxicab length can be viewed as the sum of the Euclidean lengths of the projections of the line segment onto the three coordinate axes.

\subsection{Arc Length in Three Dimensions}

The simplest extension of the arc length of a curve to three dimensions is for parametric curves. For a curve in two dimensions, we can parameterize the function $f=(x(t),y(t))$ and modify Equation (\ref{eq_arc_length_2d}) to get
\[
L=\int_a^b \! \left(\left|\frac{dx}{dt}\right|+\left|\frac{dy}{dt}\right|\right) \, dt
\]
Generalizing this to three dimensions, we have
\[
L=\int_a^b \! \left(\left|\frac{dx}{dt}\right|+\left|\frac{dy}{dt}\right|+\left|\frac{dz}{dt}\right|\right) \, dt
\]
Intuitively, Theorem \ref{thm_multiple_paths} generalizes to three-dimensions saying that the length of a three-dimensional ``monotonic'' curve depends only on its endpoints.

\section{The Nature of Area}

In prior research regarding area in taxicab geometry \cite{Kaya,OzcanKayaArea}, the underlying assumption has been that the area of a figure should agree in Euclidean and taxicab geometry, although the formulae to compute the area could be vastly different. Additionally, it appears area has only been investigated in two dimensions. In this section we wish to examine this assumption about area and explore surface area in three dimensions.

Figure \ref{fig_square_area} gets right to the question of what area means in taxicab geometry. The figures shown are each squares in both geometries: all sides and angles have the same measure. If we cling to the notion that the area of a square is the square of the length of its sides, these figures have equal Euclidean area (2) but very different taxicab areas (4 and 2). The position of the sides of the square relative to the coordinate axes in taxicab geometry affects their length which in turn affects the area calculation.

\begin{figure}[h]
\centerline{\epsffile{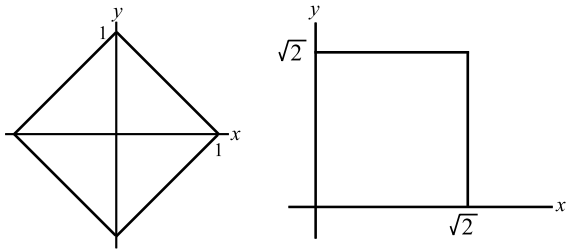}}
\caption{Squares with equal Euclidean area (2) but different taxicab area (4 and 2, respectively) when viewing area as the square of the length of the sides.}
\label{fig_square_area}
\end{figure}

We have a choice before us. We can cling to the area of a square being the square of its sides and live with the position of a figure affecting its taxicab area just as the position of a line segment affects its taxicab length. Or, we can maintain consistency with the Euclidean area of the figure and, as in this example, seek a new formula for the area of a square. Several points here and later in this paper will hopefully help us to make a firm decision here.

Reviewing the nature of length in taxicab geometry, we see agreement with Euclidean geometry on the length of a line segment (a one-dimensional ``figure'') in one dimension. Only for line segments in two dimensions, a dimension higher than the figure, do we see discrepancies between the geometries.

If we apply this same logic to area, the geometries should agree on area (a two-dimensional figure) in two dimensions. Only for the area of a surface in three dimensions would we expect to see differences because the figure would have a different position relative to the coordinate planes.

In addition, the area of a figure has traditionally been viewed as the number of square units of the plane enclosed by the figure. Since Euclidean and taxicab geometry are built on the Cartesian coordinate system, it seems logical the computation of area in two dimensions should agree in these two geometries.

With this reasoning, we proceed with the traditional Euclidean concept of area in two dimensions while looking for the proper extension of the concept to three dimensions. Therefore, for our example, we would need a new formula for the taxicab area of a square. (For the interested reader, it can be shown that the area of a square in terms of the taxicab length $s$ of its sides is $s^2(\cos_t^2\theta+\sin_t^2\theta)$ where $\theta$ is the taxicab angle between one of the sides and the $x$-axis and the trigonometric functions are taxicab \cite{ThompsonDray}.)

\subsection{Area in Two Dimensions}

The standard approach to the computation of two-dimensional area in undergraduate calculus courses is integration. Estimates of area under a functional curve involve increasing numbers of rectangles that each approximate a portion of the area under the curve. Since Euclidean and taxicab geometry agree on length in the horizontal and vertical directions, these area approximations in the two geometries will agree. Therefore, we should expect the calculation of area under a curve by integration to transfer seamlessly to taxicab geometry. This is a further argument to view area in two dimensions consistently between the two geometries.

\subsection{Surface Area in Three Dimensions}

As stated in a previous section, we should expect the taxicab area of a ``flat'' surface in three dimensions to differ from the area of the ``same'' figure (in a Euclidean sense) in one of the coordinate planes. This follows from the pattern seen with line segments in one and two dimensions. To discover how the area will differ, consider a plane rotated upward from the $xy$-plane along the $y$-axis (Figure \ref{fig_rotated_plane}).

\begin{figure}[h]
\centerline{\epsffile{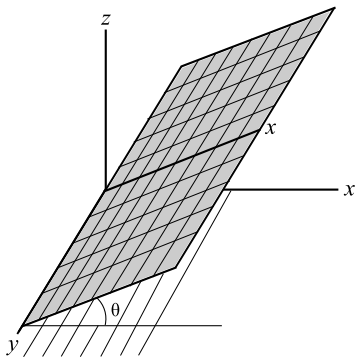}}
\caption{A plane rotated up from the $xy$-plane along the $y$-axis will have different Euclidean and taxicab scales of measurement in the $x$-direction.}
\label{fig_rotated_plane}
\end{figure}

In a manner similar to Figure \ref{scale_diff}, the taxicab scale in the $x$-direction will be compressed in the rotated plane compared to the Euclidean scale.  The angle $\theta$ formed in a plane parallel to the $xz$-plane will determine the scaling effect seen in the one dimension of the rotated plane with the maximal difference occurring at $45^{\circ}$. The result is the ``same'' figure (in a Euclidean sense) in the $xy$-plane and in the rotated plane will have different taxicab areas. (Interestingly, the taxicab coordinate grid in the rotated plane is not even composed of Euclidean squares but of Euclidean rectangles since the taxicab scale in the $y$-direction is unaffected by the specific rotation performed.)

As a specific example, a unit square at the origin in the $xy$-plane will have a Euclidean and taxicab area of 1. If we rotate the square upward $45^{\circ}$ along the $y$-axis, the Euclidean area will be unchanged but the taxicab area will now be $\sqrt{2}$ since one side will now have taxicab length $\cos 45^{\circ}+\sin 45^{\circ}=\sqrt{2}$.

If the plane is instead rotated along the $x$-axis, the taxicab area is again found by scaling the Euclidean area. When rotation occurs in both manners, the scaling factors compound each other. So, the taxicab area of a figure in the rotated plane can be found from its Euclidean area using Equation (\ref{eq_taxicab_length}) as a guide for each direction of rotation: the scaling factors are sums of the (Euclidean) cosine and sine of the angles $\alpha$ and $\beta$ formed by lines in cross-sectional planes parallel to the $xz$-plane and the $yz$-plane similar to the single rotation illustrated in Figure \ref{fig_rotated_plane},
\begin{equation}
\label{eq_scaling}
A_t=A_e(|\cos_{e}\alpha|+|\sin_{e}\alpha|)(|\cos_{e}\beta|+|\sin_{e}\beta|)
\end{equation}

\subsection{Surfaces of Revolution}

\begin{figure}[b]
\centerline{\epsffile{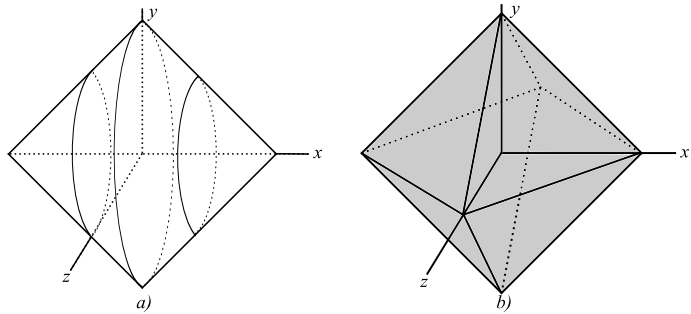}}
\caption{a) A Janssen taxicab ``sphere'' and b) a taxicab sphere.}
\label{fig_spheres}
\end{figure}

In \cite{Janssen} Janssen makes several attempts to revolve half a taxicab circle about an axis to obtain a taxicab sphere. The first approach revolves the upper half of a taxicab circle in a \emph{Euclidean} manner about the horizontal axis resulting in a ``sphere'' with some straight edges but also with some curved surfaces (Figure \ref{fig_spheres}a). This does not seem to quite fit the ``squarish'' nature of seemingly everything in taxicab geometry and does not appear to satisfy the usual definition of a sphere (all points equidistant from the center).

The author's line of thought is quite instructive, however. If we consider a Euclidean solid of revolution about the $x$-axis, the cross-section of the solid in the $yz$-plane is a Euclidean circle. Therefore, for a taxicab revolution about the $x$-axis, the cross-section should be a {\em taxicab} circle in that plane. A taxicab sphere obtained by revolving a taxicab circle about the horizontal axis in a taxicab manner would yield Figure \ref{fig_spheres}b. This completely ``squarish'' object seems to fit the geometry better and indeed does, by construction, satisfy the usual definition of a sphere. So, revolution about the horizontal axis should follow the path of a taxicab circle in the $yz$-plane and not the path of a Euclidean circle.

As an application of our approach to surface area in three dimensions, we would like to find a formula for the surface area of a solid of revolution. For a differentiable function $f$ over an interval $[a,b]$ divide the interval into $n$ subintervals. Using a linear approximation of the curve over each subinterval, we revolve each line segment about the $x$-axis. In the Euclidean derivation, this creates the frustrum of a cone. In taxicab geometry, the discussion above shows us this will create the frustrum of a taxicab cone, which is equivalent to the frustrum an Euclidean right square pyramid (partially shown in Figure \ref{fig_revolve}). To find the taxicab surface area of the frustrum, we will compute the Euclidean area and then scale the area by examining the rotations of the frustrum sides to the $xy$-plane.

\begin{figure}[b]
\centerline{\epsffile{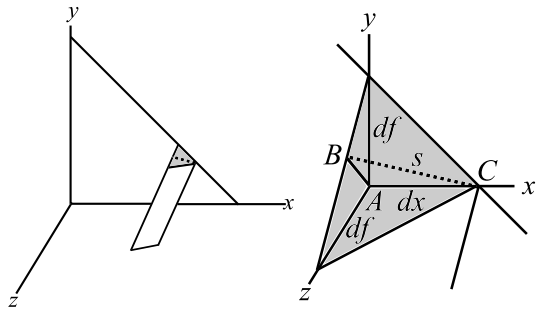}}
\caption{A portion of the revolution of a line segment about the $x$-axis in a taxicab manner with a close-up of the shaded triangle. (The full revolution creates the frustrum of an Euclidean right square pyramid.)}
\label{fig_revolve}
\end{figure}

The Euclidean surface area of the frustrum of a pyramid is $\frac{1}{2}(p_1+p_2)s$ where $p_1$ and $p_2$ are the perimeters of the ``top'' and ``bottom'' edges of the frustrum and $s$ is the slant height. For the $k$th subinterval, the Euclidean perimeters are $4\sqrt{2}f(x_{k-1})$ and $4\sqrt{2}f(x_k)$. To compute the slant height, we project the shaded triangle in Figure \ref{fig_revolve} onto a plane parallel to the $yz$-plane. The slant height $s$ is the hypotenuse of the right triangle $ABC$. The projected triangle is an isoceles right triangle with legs of length $|f(x_{k})-f(x_{k-1})|$ (labeled $df$ in the figure), so the altitude of this triangle is $\frac{\sqrt{2}}{2}|f(x_{k})-f(x_{k-1})|$. The other leg of triangle $ABC$ has length $\triangle x_k$ (labeled $dx$ in the figure). Therefore, the slant height is
\[
s=\sqrt{(\triangle x_k)^2+\frac{1}{2}[f(x_{k})-f(x_{k-1})]^2}
\]
So, we have the Euclidean surface area of $k$th frustrum of the approximation:
\begin{eqnarray*}
S_{k_e} & = & \frac{1}{2}(p_1+p_2)s\\
        & = & \frac{1}{2}(4\sqrt{2}f(x_{k-1})+4\sqrt{2}f(x_k))\sqrt{(\triangle x_k)^2+\frac{1}{2}[f(x_{k})-f(x_{k-1})]^2}\\
        & = & 2\sqrt{2}(f(x_{k-1})+f(x_k))\sqrt{(\triangle x_k)^2+\frac{1}{2}[f(x_{k})-f(x_{k-1})]^2}
\end{eqnarray*}

The sides of this frustrum form Euclidean angles of $45^{\circ}$ with the $xy$-plane using a cross-sectional plane parallel to the $xz$-plane. This gives one scaling factor for the taxicab surface area of $\cos 45^{\circ}+\sin 45^{\circ}=\sqrt{2}$. The other scaling factor is dependent on the sum of the cosine and sine of the angle of the linear approximation of the function and the $x$-axis. This gives another scaling factor of
\[
\frac{\triangle x_k+|f(x_k)-f(x_{k-1})|}{\sqrt{(\triangle x_k)^2+[f(x_k)-f(x_{k-1})]^2}}
\]
Therefore, using Equation (\ref{eq_scaling}) the taxicab surface area of the $k$th frustrum of the approximation is
\[
{\textstyle
S_k=\frac{4(f(x_{k-1})+f(x_k))(\triangle x_k+|f(x_k)-f(x_{k-1})|)\sqrt{(\triangle x_k)^2+\frac{1}{2}[f(x_{k})-f(x_{k-1})]^2}}{\sqrt{(\triangle x_k)^2+[f(x_k)-f(x_{k-1})]^2}}
}
\]
As the derivation now continues along the same lines as the Euclidean version, the Intermediate Value Theorem and the Mean Value Theorem give us values $x_k^*$ and $x_k^{**}$ such that $f(x_k^*)=\frac{1}{2}(f(x_k)+f(x_{k-1}))$ and $f(x_k)-f(x_{k-1})=f'(x_k^{**})\triangle x_k$. Therefore,
\begin{eqnarray*}
S_k & = & \frac{8f(x_k^*)(\triangle x_k+|f'(x_k^{**})|\triangle x_k)\sqrt{(\triangle x_k)^2+\frac{1}{2}[f'(x_k^{**})\triangle x_k]^2}}{\sqrt{(\triangle x_k)^2+[f'(x_k^{**})\triangle x_k]^2}}\\
    & = & \frac{8f(x_k^*)(1+|f'(x_k^{**})|)\sqrt{1+\frac{1}{2}[f'(x_k^{**})]^2}}{\sqrt{1+[f'(x_k^{**})]^2}} \triangle x_k\\
    & = & 8f(x_k^*)(1+|f'(x_k^{**})|)\sqrt{1-\frac{[f'(x_k^{**})]^2}{2(1+[f'(x_k^{**})]^2)}} \triangle x_k
\end{eqnarray*}
Accumulating the frustrums and taking the limit yields the formula for the taxicab surface area of a solid of revolution:
\begin{equation}
\label{eq_surface_area}
S = \int_{a}^{b} \! 2\pi_t f(x)(1+|f'(x)|)\sqrt{1-\frac{[f'(x)]^2}{2(1+[f'(x)]^2)}} \, dx
\end{equation}

This is a very interesting formula. The portion outside the radical is very reminiscent of the Euclidean formula with the expected changes in the circle circumference factor ($2\pi_t f(x)$) and the arc length factor ($1+|f'(x)|$) due to the taxicab metric. The extra radical represents a scaling factor based on the ratio of the slant height to the linear approximation of the curve. The greater the difference between these two quantities the more the frustrum sides are rotated with respect to the $xy$-plane thus requiring more scaling. If these two quantities are close, the frustrum sides are rotated very little and therefore require little scaling.

To complete the analysis inspired by Janssen, half of a taxicab sphere of radius $r$ is obtained by revolving the function $f(x)=-x+r$ over the interval $[0,r]$. Doubling this result yields the surface area of a whole taxicab sphere:
\begin{eqnarray*}
S & = & 2\int_{0}^{r} \! 2\pi_t (-x+r)(1+|-1|)\sqrt{1-\frac{(-1)^2}{2(1+(-1)^2)}} \, dx\\
 & = & 4\pi_t \sqrt{3}\int_{0}^{r} \! (-x+r) \, dx\\
 & = & -2\pi_t \sqrt{3}\left.(-x+r)^2\right|_{0}^{r}\\
 & = & 2\pi_t \sqrt{3}r^2
\end{eqnarray*}
Such a taxicab sphere is composed of two Euclidean right square pyramids of height $\frac{r\sqrt{6}}{2}$ and base side length $r\sqrt{2}$. The Euclidean surface area is therefore $4r^2\sqrt{3}$. Since the sides are at $45^{\circ}$ angles to the $xy$-plane in both directions, the taxicab area scaling factors are both $\sqrt{2}$. This gives a taxicab surface area of $8r^2\sqrt{3}$ in perfect agreement with the formula obtained from the solid of revolution.

\section{The Nature of Volume}

If we follow the same pattern used for length and area measures, we expect the Euclidean and taxicab volume of a solid (a three-dimensional object) in three dimensions to be equal. Only in four dimensions would we expect orientation of the solid to impact the taxicab volume when compared to the Euclidean volume.

\subsection{Solids of Revolution}

To compute the volume of a Euclidean solid of revolution, the traditional ``slicing'' method involves computing the volume of an infinitessimal cylinder as $dx$ multipled by the area of the Euclidean circle of revolution. This method can be adapted directly into taxicab geometry by replacing the Euclidean circle area with the taxicab circle area \emph{if} the volume of a taxicab cylinder is found in the same manner.

A taxicab cylinder resembles an Euclidean cylinder except that the cross section is a taxicab circle (Figure \ref{fig_cylinder}). For such a cylinder lying along the $x$-axis, the cross-sectional area will lie in a plane parallel to the $yz$-plane and therefore the Euclidean and taxicab areas will agree. In addition, the height of the cylinder will agree in the two geometries as it lies along a coordinate axis. Based on our assumption that volume will agree in three dimensions between the geometries, the taxicab volume of a cylinder is the product of the area of the cross-sectional taxicab circle and the height.

\begin{figure}[t]
\centerline{\epsffile{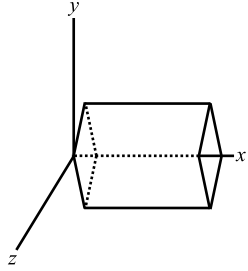}}
\caption{A taxicab cylinder.}
\label{fig_cylinder}
\end{figure}

A taxicab circle of radius $r$ is composed of four isoceles triangles with area $\frac{1}{2}r^2$ (Proposition 1 of \cite{OzcanKayaArea}). Therefore, a taxicab cylinder of radius $r$ and height $h$ will have volume
\[
V_t=2r^2h=\frac{1}{2}\pi_t r^2h
\]

With this information we can now compute the volume of a solid of revolution obtained by revolving a continuous function $f$ about the $x$-axis over an interval $[a,b]$.
\begin{equation}
\label{eq_solid_volume}
V_t=\int_{a}^{b} \! \frac{1}{2}\pi_t [f(x)]^2 \, dx
\end{equation}

Continuing our primary example, the upper half of a taxicab circle of radius $r$ centered at the origin is described by
\[
f(x)=
\begin{cases}
x+r & -r\leq x \leq 0 \\
-x+r & 0 < x \leq r
\end{cases}
\]
Using Equation (\ref{eq_solid_volume}) the volume of a taxicab sphere obtained by revolving the upper half of the circle about the $x$-axis is
\begin{eqnarray*}
V & = & \int_{-r}^{r} \! \frac{1}{2}\pi_t [f(x)]^{2} \, dx\\
 & = & \frac{1}{2}\pi_t \int_{-r}^{0} \! (x+r)^{2} \, dx + \frac{1}{2}\pi_t \int_{0}^{r} \! (-x+r)^{2} \, dx\\
 & = & \left.\frac{1}{6}\pi_t (x+r)^{3} \, \right|_{-r}^{0} - \left.\frac{1}{6}\pi_t (-x+r)^{3} \, \right|_{0}^{r}\\
 & = & \frac{1}{3}\pi_t r^{3}
\end{eqnarray*}
This result agrees precisely with the volume of a taxicab sphere as a special case of a tetrahedron \cite{Colakoglu} providing some assurance of our concept of volume and computational approach.

It should also be noted that the surface area of a sphere is not the derivative of the volume with respect to the radius. This is a consequence of the radius of the sphere not being everywhere perpendicular to the surface.

\section{Other Examples}

To conclude our discussion, we can derive surface area and volume formulae for the taxicab equivalents of a few common solids.

\begin{figure}[b]
\centerline{\epsffile{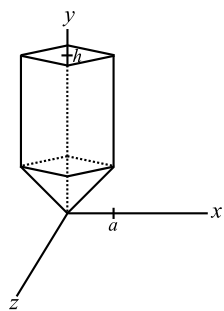}}
\caption{A taxicab paraboloid.}
\label{fig_paraboloid}
\end{figure}

Using the standard definition of a taxicab parabola described in \cite{Laatsch}, half of a (horizontally) parallel case of the parabola with focus $(0,a)$ and directrix $y=-a$ is given by
\[
f(y)=\begin{cases}
y, & 0\leq y\leq a\\
a, & y>a\end{cases}
\]
Restricting the total ``height'' of the parabola to $h$ and revolving the curve about the $y$-axis yields an open-top taxicab paraboloid (Figure \ref{fig_paraboloid}) surface area
\begin{eqnarray*}
S & = & \int_{0}^{a} \! 2\pi_t y(2)\sqrt{1-\frac{(1)^2}{2(1+(1)^2)}} \, dy + \int_{a}^{h} \! 2\pi_t a(1)\sqrt{1-\frac{(0)^2}{2(1+(0)^2)}} \, dy\\
  & = & 2\pi_t\sqrt{3}\int_{0}^{a} \! y \, dy+2\pi_t a\int_{a}^{h} \! dy\\
  & = & \left.\pi_t\sqrt{3} y^{2} \, \right|_{0}^{a}+\left.2\pi_t ay \, \right|_{a}^{h}\\
  & = & \pi_t a^2\sqrt{3}+2\pi_t a(h-a)
\end{eqnarray*}
As we would expect based on Figure \ref{fig_paraboloid}, this is the sum of the surface area of a cylinder with radius $a$ and height $h-a$ and half a sphere of radius $a$.

For the volume of a paraboloid we have
\begin{eqnarray*}
V & = & \int_{0}^{h} \! \frac{1}{2}\pi_t \left(f\left(y\right)\right)^{2} \, dy\\
  & = & \int_{0}^{a} \! \frac{1}{2}\pi_t y^{2} \, dy+\int_{a}^{h} \! \frac{1}{2}\pi_t a^{2} \, dy\\
  & = & \left.\frac{1}{6}\pi_t y^{3} \, \right|_{0}^{a}+\left.\frac{1}{2}\pi_t a^{2}y \, \right|_{a}^{h}\\
  & = & \frac{1}{6}\pi_t a^{3}+\frac{1}{2}\pi_t a^{2}(h-a)
\end{eqnarray*}
Again, as expected, this is the sum of the volume of a cylinder of radius $a$ and height $h-a$ and half a sphere of radius $a$.

In the defining paper concerning conics in taxicab geometry \cite{KayaConics}, nondegenerate (or ``true'') two-foci taxicab ellipses are described as taxicab circles, hexagons, and octagons. If we revolve half of one of these figures about the $x$-axis we obtain a taxicab ellipsoid (Figure \ref{fig_ellipsoid}). If we consider a taxicab ellipse with major axis length $2a$, minor axis length $2b$, and $s$ the sum of the distances from a point on the ellipse to the foci, the function
\[
f(x)=
\begin{cases}
x+\frac{s}{2} & -a\leq x < b-\frac{s}{2} \\
b & b-\frac{s}{2} \leq x < -b+\frac{s}{2} \\
-x+\frac{s}{2} & -b+\frac{s}{2} \leq x \leq a
\end{cases}
\]
describes the upper half of an ellipse centered at the origin. This function will generally cover the sphere ($s=2a$ and $a=b$), hexagon ($s=2a$ and $a>b$), and octagon ($s>2a$) cases for a taxicab ellipse. The ellipsoid solid of revolution will have volume
\begin{eqnarray*}
V & = & \frac{1}{2}\pi_t \int_{-a}^{b-\frac{s}{2}} \! \left(x+\frac{s}{2}\right)^2 \, dx + \frac{1}{2}\pi_t \int_{b-\frac{s}{2}}^{-b+\frac{s}{2}} \! b^2 \, dx + \frac{1}{2}\pi_t \int_{-b+\frac{s}{2}}^{a} \! \left(-x+\frac{s}{2}\right)^2 \, dx\\
  & = & \frac{1}{3}\pi_t b^3-\frac{1}{6}(s-2a)^3+\frac{1}{2}\pi_t b^2(s-2b)
\end{eqnarray*}
For the case of a spherical taxicab ellipsoid (Figure \ref{fig_ellipsoid}c), this formula reduces to $\frac{1}{3}\pi_t b^3$ in agreement with our previous result. For the case of a hexagon (Figure \ref{fig_ellipsoid}b), this formula reduces to $\frac{1}{3}\pi_t b^3+\frac{1}{2}\pi_t b^2(2a-2b)$ which is the sum of the volume of a taxicab sphere of radius $b$ and a taxicab cylinder of radius $b$ and height $2a-2b$. This is to be expected since a hexagonal taxicab ellipsoid is composed of a taxicab cylinder capped by two taxicab half-spheres.

\begin{figure}[t]
\centerline{\epsffile{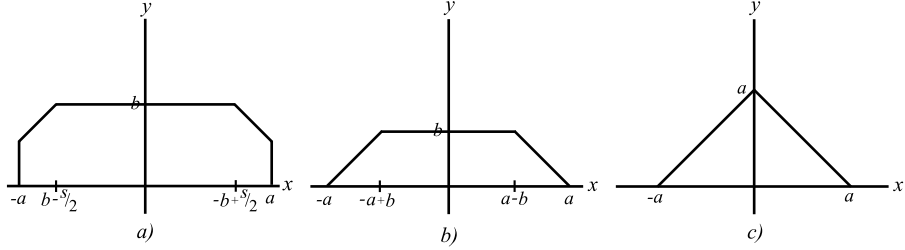}}
\caption{Nondegenerate, two-foci taxicab ellipses. Shown are the upper halves of a) the octogonal ellipse, b) the hexagonal ellipse, and c) the circular ellipse (a taxicab circle).}
\label{fig_ellipsoid}
\end{figure}

In Euclidean geometry, there is not a closed form for the surface area an ellipsoid. Since a taxicab ellipse is composed of straight lines, this problem is avoided in taxicab geometry. The surface area of an taxicab ellipsoidal solid of revolution is
\begin{eqnarray*}
S & = & 2\pi_t \int_{-a}^{b-\frac{s}{2}} \! \left(x+\frac{s}{2}\right)(1+|1|)\sqrt{1-\frac{(1)^2}{2(1+(1)^2)}} \, dx\\
  & & {} + 2\pi_t \int_{b-\frac{s}{2}}^{-b+\frac{s}{2}} \! b(1+0)\sqrt{1-\frac{(0)^2}{2(1+(0)^2)}} \, dx\\
  & & {} + 2\pi_t \int_{-b+\frac{s}{2}}^{a} \! \left(-x+\frac{s}{2}\right)(1+|-1|)\sqrt{1-\frac{(-1)^2}{2(1+(-1)^2)}} \, dx\\
  & & {} + 2(\frac{1}{2}\pi_t (\frac{s}{2}-a)^2)\\
  & = & 2\pi_t \sqrt{3}b^2-2\pi_t \sqrt{3}(\frac{s}{2}-a)^2+2\pi_t b(s-2b)+\pi_t (\frac{s}{2}-a)^2
\end{eqnarray*}
For a hexagonal ellipsoid, the first term is the surface area of the ends which combine to equal a taxicab sphere of radius $b$; the second and last terms are zero; and, the third term is the area of a taxicab cylinder of radius $b$ and length $s-2b$ composing the middle of the ellipsoid. For the octogonal ellipsoid (Figure \ref{fig_ellipsoid}a), the first term is an overestimate since the top of the sphere is not present on either end. This is corrected by the subtraction of the second term amounting to a taxicab sphere of radius $\frac{s}{2}-a$. (A similar correcion term is seen in the volume formula above.) The last term accounts for the area of the ends of the ellipsoid which are taxicab circles of radius $\frac{s}{2}-a$.

\section{Conclusion}

Generalizing the patterns we have set forth in this paper, it appears that the taxicab measure of a $n$-dimensional figure in $n$-dimensional space will agree with the Euclidean measure of the figure. However, in $(n+1)$-dimensional space, the taxicab measure of the figure will in general depend on its position in the space when compared with the Euclidean measure. Along the way we generalized the observation that multiple paths between two points can have the same taxicab length and described a strategy for dealing with figures living in a dimension higher than themselves.  We also developed in taxicab geometry the strategy of revolving a function about the $x$-axis axis by observing that the shape of the cross-section of the solid in the $yz$-plane should be a taxicab circle. As in Euclidean geometry, this has yielded a simple method for deriving surface area and volume formulas for some standard taxicab solids created as solids of revolution.

\medskip

{\em \noindent The author would like to thank Tevian Dray of Oregon State University for his help and suggestions during the writing and revision of this paper.}

\end{document}